\documentclass[11pt, a4paper, oneside, reqno]{amsart}

\usepackage{comment}
\renewcommand{\epsilon}{\varepsilon}

\usepackage{cancel}
\usepackage[labelsep=endash]{caption}
\usepackage[utf8]{inputenc}
\usepackage{float}
\usepackage{booktabs}
\newcommand{\edit}[1]{\textcolor{black}{#1}}


\addtolength{\voffset}{0cm} 
\addtolength{\textheight}{1cm} 
\addtolength{\hoffset}{-2cm}
\addtolength{\textwidth}{4cm}

\setlength{\parskip}{1mm}
\linespread{1.2}



\makeatletter
\def\@settitle{\begin{center}%
		\baselineskip14\p@\relax
		\normalfont\LARGE\scshape\bfseries
		\@title
	\end{center}%
}
\makeatother

\makeatletter

\def\subsection{\@startsection{subsection}{2}%
	\z@{.5\linespacing\@plus.7\linespacing}{.5\linespacing}%
	{\normalfont\large\bfseries}}

\def\subsubsection{\@startsection{subsubsection}{3}%
	\z@{.5\linespacing\@plus.7\linespacing}{.5\linespacing}%
	{\normalfont\itshape}}

\usepackage[colorlinks	= true,
			raiselinks	= true,
			linkcolor	= MidnightBlue,
			citecolor	= Mahogany,
			urlcolor	= ForestGreen,
			pdfauthor	= {Peyman Mohajerin Esfahani},
			pdftitle	= {},
			pdfkeywords	= {},   
			pdfsubject	= {},
			plainpages	= false]{hyperref}
\usepackage[usenames, dvipsnames]{color}
\usepackage{dsfont,amssymb,amsmath,subfig, graphicx,fancyhdr,mdframed}
\usepackage{amsfonts,dsfont,mathtools, mathrsfs,amsthm,wrapfig} 
\usepackage[]{siunitx}
\usepackage{ragged2e}
\captionsetup[subfigure]{margin=0pt, parskip=0pt, hangindent=0pt, indention=0pt, 
labelformat=parens, labelfont=rm}

\usepackage{enumitem}
\usepackage{algorithm, algorithmic}
\allowdisplaybreaks
\usepackage{breqn}
\usepackage[font=small,labelfont=bf,
   justification=justified,
   format=plain]{caption}



\allowdisplaybreaks
\date{\today}

\patchcmd\@setauthors
  {\MakeUppercase{\authors}}
  {\authors}
  {}{}

\newtheorem{theorem}{Theorem}[section]

\newtheorem{lemma}[theorem]{Lemma}
\newtheorem{assumption}[theorem]{Assumption}
\newtheorem{remark}[theorem]{Remark}

\newtheorem{proposition}[theorem]{Proposition}

\graphicspath{{images/}}

\title[A Frank-Wolfe Algorithm for Strongly Monotone Variational Inequalities]{\edit{A Frank-Wolfe Algorithm for \\ Strongly Monotone Variational Inequalities}}

\author[R. {Rahimi Baghbadorani}]{Reza {Rahimi Baghbadorani}\textsuperscript{1}}
\author[P. {Mohajerin Esfahani}]{Peyman {Mohajerin Esfahani}\textsuperscript{1,2}}
\author[S. Grammatico]{Sergio Grammatico\textsuperscript{1}}

\thanks{\edit{The authors are with (1) Delft University of Technology and (2) the University of Toronto. This work was supported by the ERC grant TRUST-949796 and the NSERC Discovery grant RGPIN-2025-06544.}}

\begin{document}

\maketitle
\begin{abstract}   
We propose an accelerated algorithm with a Frank-Wolfe method as an oracle for solving strongly monotone variational inequality problems. While standard solution approaches, such as projected gradient descent (aka value iteration), involve projecting onto the desired set at each iteration, a distinctive feature of our proposed method is the use of a linear minimization oracle in each iteration. This difference potentially reduces the projection cost, a factor that can become significant for certain sets or in high-dimensional problems. We validate the performance of the proposed algorithm on the traffic assignment problem, motivated by the fact that the projection complexity per iteration increases exponentially with respect to the number of links.

\textbf{Keywords:} Frank-Wolfe algorithm, projection free methods, linear minimization oracle, strongly monotone variational inequality, traffic assignment problem  
\end{abstract}

\section{Introduction}\label{intro}
We consider the variational inequality problem (VIP)
\begin{equation}\label{VI-main}
\text{find} \quad x^* \in \mathcal{V} \quad \textbf{s.t.} \quad \langle g(x^*), x - x^* \rangle \geq 0, \quad \forall x \in \mathcal{V},
\end{equation}
where $\mathcal{V}$ is a compact convex set. We assume that the operator $g$ is $\mu$-strongly monotone, $L$-Lipschitz continuous, and that the solution set of \eqref{VI-main} is nonempty. The VIP is a general framework that includes several problems and applications in systems and control theory, machine learning, and operations research. For instance, both composite convex minimization and convex-concave minimax saddle point problems can be reformulated as in \eqref{VI-main} \cite{baghbadorani2024adaptive,malitsky2020golden}, which can be used in robust controller design for systems under uncertainty \cite{ball2002robust}, Nash \cite{scutari2010convex} and Stackelberg games \cite{hu2011variational}, supply chain optimization \cite{allevi2018evaluating}, and adversarial learning problems \cite{gidel2018variational}.\\
Fixed-point problems represent another important class in several domains, such as game theory and machine learning. Although it is straightforward to reformulate the VIP \eqref{VI-main} as a fixed-point problem, the converse can be computationally beneficial as discussed in \cite[section 3]{malitsky2020golden}. For instance, variational Nash equilibrium problems can be reformulated as VIPs enabling the use of efficient iterative algorithms \cite{harker1984variational, facchinei2003finite}. \\
For further applications of VIPs in systems and control theory, operations research, game theory, and machine learning, we refer interested readers to \cite{facchinei2003finite, yao1991generalized, bertsekas2009projection, baghbadorani2025douglas, pantazis2024nash} and references therein. \\
{To solve the VIP \eqref{VI-main}, several iterative algorithms have been proposed. For the sake of comparison, we review some recent and closely related methods for solving the general VIP \eqref{VI-main}, where the operator $g$ is (strongly)~monotone.}
\begin{enumerate}[label=(\roman*), itemsep = 0mm, topsep = 0mm, leftmargin = 7mm]
\item
\edit{\textbf{Projected Gradient Descent \cite{nemirovskij1983problem}:} A classical approach inspired by gradient descent in the optimization literature is  
\begin{align*}
    x_{k+1} &= \mathrm{proj}_{\mathcal{V}}\Bigl(x_k - \alpha g(x_k)\Bigr),
\end{align*}
where the operator $g(\cdot)$ replaces the gradient operator and  \(\alpha\) is the stepsize. In the literature of fixed point computation (e.g., dynamic programming), this is also known as {\em Value Iteration}. This method guarantees convergence for strongly monotone and Lipschitz operator $g$ for any stepsize $\alpha \in (0, 2\mu/L^2)$ where $\mu$ and $L$ are the strong monotonicity constant and the Lipschitz constant, respectively.}

\item
\textbf{Extragradient Descent \cite{malitsky2014extragradient}:} \edit{An improvement to the gradient descent approach is to call the operator $g$ twice in order to improve the convergence rate. This yields the algorithm}
\begin{align*}
    y_k &= \mathrm{proj}_{\mathcal{V}}\Bigl(x_k - \alpha g(x_k)\Bigr), \\
    x_{k+1} &= \mathrm{proj}_{\mathcal{V}}\Bigl(x_k - \alpha g(y_k)\Bigr),
\end{align*}
with \(\alpha\) as the stepsize. Unlike classic projected gradient descent, this method does not require strong monotonicity of the operator \(g\) and ensures convergence for a Lipschitz operator when the stepsize \(\alpha \in (0, 1/L)\).

\item
\textbf{Accelerated gradient descent \cite{nesterov2006solving}:} \edit{An influential idea in optimization, first proposed by Nesterov \cite{nesterov1983method}, is to accelerate algorithm convergence by incorporating a so-called {\em momentum} into the update dynamics. One can draw a parallel, in a similar fashion as in the Gradient Descent (i.e., replacing the gradient with the operator $g$), and arrive at }
\begin{align*}
    x_k &= \arg\max_{x \in \mathcal{V}} \, \sum_{i=0}^k \alpha_i \left[\langle g(y_i), y_i - x \rangle - \frac{\mu}{2} \|x - y_i\|^2\right], \\
    y_{k+1} &= \arg\max_{y \in \mathcal{V}} \, \langle g(x_k), x_k - y \rangle - \frac{\beta}{2} \|y - x_k\|^2,
\end{align*}
\edit{where to ensure the convergence, it suffices to choose the (time-varying) stepszie and momentum coefficients as \(\alpha_{k+1} = \frac{\mu}{L} \sum_{i=0}^k \alpha_i\) and \(\beta = L\).}

\item
\textbf{Projected Reflected Gradient Descent \cite{malitsky2015projected}:} \edit{Evaluating the operator at a reflected point, an extrapolation of the current and previous iterates, enhances stability and convergence in monotone VIP, leading to the following algorithm }
\begin{align*}
    x_{k+1} &= \mathrm{proj}_{\mathcal{V}}\Bigl(x_k - \alpha g(2x_k - x_{k-1})\Bigr),
\end{align*}
where \(\alpha \in \Big(0, (\sqrt{2}-1)/L\Big)\) is the stepsize. This method guarantees convergence for a Lipschitz operator, and unlike the extragradient method, it requires only one projection per iteration.

\item
\textbf{Golden Ratio Algorithm \cite{malitsky2020golden}:} \edit{To guarantee convergence when using the Gradient Descent method for Lipschitz but non-strongly monotone operators, negative momentum parameters are required. Introducing a negative momentum ensures convergence, leading to the algorithm}  
\begin{align*}
    y_k &= (1-\zeta)x_k + \zeta y_{k-1}, \\
    x_{k+1} &= \mathrm{proj}_{\mathcal{V}}\Bigl(y_k - \alpha g(x_k)\Bigr),
\end{align*}
with \(\alpha \in \Bigl(0, 1/(2\zeta L)\Bigr)\) as the stepsize and momentum parameter \(\zeta \in \Bigl(0, (\sqrt{5}-1)/2\Bigr]\). Additionally, overcomes the limitation of methods that rely on problem constants (like Lipschitz continuity parameter); the stepsize can be chosen adaptively, leading to the adaptive golden ratio algorithm \cite{malitsky2020golden} 
\begin{align*}
    \alpha_k &= \min\Biggl\{(\zeta + \zeta^2)\alpha_{k-1}, \frac{\|x_k - x_{k-1}\|^2}{4\zeta^2\alpha_{k-2}\|g(x_k) - g(x_{k-1})\|^2}\Biggr\}.
\end{align*}

\item
\textbf{{Operator splitting methods} \cite{facchinei2003finite}:} \edit{The operator \( g \) can be split into a summation of different operators. Solving VIP \eqref{VI-main} in these cases is equivalent to solving the fixed-point problem \(0 \in g_1(x) + g_2(x)\), where \(g(x) + \mathcal{N}_{\mathcal{V}}(x) = g_1(x) + g_2(x)\) and $\mathcal{N}_{\mathcal{V}}(x)$ is the normal cone of the compact, convex set $\mathcal{V}$ at the point $x$. The iterative update of each sub-operator leads to convergence towards the solution. A well-known class of these algorithms is the Douglas-Rachford splitting method \cite{bauschke2017correction}, which can be written as}
\begin{align*}
   y_{k+1} &= (I + \alpha g_2)^{-1}\Bigl(x_k - \alpha g_1(x_k)\Bigr), \\
   x_{k+1} &= x_k + \zeta(y_{k+1} - x_k),
\end{align*}
where the stepsize can be chosen $\alpha \in (0,1]$ and $\zeta \in (0,2)$ is a relaxation parameter. The convergence of this method is guaranteed for different cases of $g_1$ and $g_2$; we refer interested readers to~\cite{giselsson2017tight,moursi1805douglas} for further details. Additionally, we refer them to~\cite{mignoni2025monviso} for further algorithms related to applications of monotone variational inequalities and an open source Python toolbox.
\end{enumerate}
\edit{In the methods mentioned above, the projection operator onto the feasible set $\mathcal{V}$ ($\mathrm{proj}_{\mathcal{V}}(\cdot)$) is a necessary operation of the algorithm which can be costly in some cases. Frank-Wolfe (FW) is a classical approach to avoid this projection complexity by resorting to a linear oracle minimization (as opposed to the quadratic optimization of the projection operator) over the same set. Table \ref{table1} summarizes the computational complexity of the linear oracle minimization and the projection onto certain sets \cite{combettes2021complexity, braun2022conditional}.}
\begin{table}[H]
    \captionsetup{width=.95\textwidth}
    \caption{\edit{Complexity of linear minimization and projection. The parameter $\varepsilon$ denotes the precision of linear minimization or projection operator.}}
    \begin{tabular}{c c c}\hline
    {Set} & {Linear minimization} & {Projection}\\
    \hline & \\[-1.0em]
    \hspace{-3.5cm} \edit{(1) $n$-dimensional $\ell_p$-ball, $p \neq 1,2,\infty$.} & $\mathcal{O}(n)$ & $\tilde{\mathcal{O}}(n/ \varepsilon^2)$\\
    \hspace{-.15cm} \edit{(2) Nuclear norm ball of $n\times m$ matrices. $\theta$ and $\tau$ denote}\\
    \hspace{.15cm} \edit{the number of non-zero entries and the top singular}\\
    \hspace{-1.35cm} \edit{value of the projected matrix, respectively.} & $\mathcal{O}(\theta \ln{(m+n)}\sqrt{\edit{\tau}/ \varepsilon})$ & $\mathcal{O}(mn \min\{m,n\})$\\
    \hspace{-2cm} \edit{(3) Flow polytope on a graph with $m$ vertices} \\ \hspace{-1.25cm} \edit{and $n$ edges with capacity bound on edges.} & $\mathcal{O}((n \log{m})(n+m \log{m}))$ & $\mathcal{O}(n^4 \log{n})$\\
    \edit{(4) Birkhoff polytope ($n \times n$ doubly stochastic matrices).} & $\mathcal{O}(n^3)$ & $\tilde{\mathcal{O}}(n^2 / \varepsilon^2)$\\
        \\[-1.0em]
        \hline
    \end{tabular}
    \label{table1}
\end{table}
\edit{For concave-convex minimax problems and inspired by the recent works \cite{jaggi2013revisiting, lacoste2015global, hammond1984solving}, the authors of \cite{gidel17a} and \cite{chen2020efficient} prove convergence of different types of FW algorithms. They demonstrate the numerical efficiency of the FW algorithm via illustrative saddle point problems, in which projecting onto the corresponding sets is computationally demanding or even intractable; see the perfect matching problem as an example of this class of problems \cite{kolmogorov2023solving}. Motivated by this, a natural question is whether the FW algorithm can also be utilized for solving the VIP \eqref{VI-main}. To the best of our knowledge, this question is still largely unexplored, which is the focus of this study.}

\textbf{Contribution.} Following the footsteps of the accelerated Nesterov's technique for solving strongly monotone variational inequalities \cite{nesterov2006solving} and the FW algorithm for solving saddle point problems \cite{gidel17a}, we propose a novel accelerated algorithm with FW method as an oracle for solving the strongly monotone VIP \eqref{VI-main} and provide a non-asymptotic convergence rate. To validate the theoretical results, we implement the proposed method in the traffic assignment problem, \edit{an important application in transportation and operation research \cite{bertsekas1980class, bliemer2003quasi, bertsekas2009projection}}, where the complexity of the problem (the corresponding set) increases exponentially with the number of variables.

\textbf{Roadmap}. The paper is organized as follows: Section~\ref{sec2} reviews the Frank-Wolfe technique and states lemmas for solving \textit{minmax} saddle point problems. In Section~\ref{sec3}, we propose our algorithm and provide the technical proofs of the convergence theorems. Section~\ref{simulation} benchmarks the proposed algorithm in a traffic assignment application. Finally, the conclusion and future research directions are given in Section \ref{conclussion}.

\edit{\textbf{Notation.} Let $\mathcal{V}$ be a finite-dimensional real Hilbert space equipped with the standard inner product $\langle \cdot, \cdot \rangle$ and the associated norm $\|\cdot\|$ (so that we may simply write $\|x\|^2 = \langle x, x \rangle$). We define the normal cone of the set \( \mathcal{V} \) at the point \( x \in  \mathcal{V} \) as $\mathcal{N}_{\mathcal{V}} (x) = \{ u : u^\top x \geq u^\top y, \,\,  \forall y \in \mathcal{V} \}$.
The operator $\mathrm{proj}_{x \in \mathcal{V}}$ denotes the projection onto set $\mathcal{V}$ with respect to the underlying inner product norm (i.e., $\mathrm{proj}_{x \in \mathcal{V}} := \arg\min_{y \in \mathcal{V}} \|x - y\|$). We define the diameter and boundary distance of a set $\mathcal{X}$ as
 \begin{align*}
     D_{\mathcal{X}} := \sup\limits_{x, x' \in \mathcal{X}} \|x - x'\| \quad \text{and} \quad \sigma_{\mathcal{X}}(x) := \min\limits_{s \in \partial \mathcal{X}} \|x - s\|. 
 \end{align*}
The function $F(x, y)$ is $(\mu_{\mathcal{X}}, \mu_{\mathcal{Y}})$ strongly convex-concave if $F(x,y) - \frac{\mu_{\mathcal{X}}}{2}\|x\|^2 + \frac{\mu_{\mathcal{Y}}}{2}\|y\|^2$ is convex-concave. The constants $L_{XY}$ and $L_{YX}$ are the cross smooth constants of $F(x, y)$ (or equivalently the Lipschitz constants of $\nabla F$) over the set $\mathcal{X} \times \mathcal{Y}$ if for all $x,\Bar{x} \in \mathcal{X}$ and $y,\Bar{y} \in \mathcal{Y}$
\begin{align*}
    \| \nabla_x F(x,y) - \nabla_x F(x, \Bar{y})\| &\leq L_{XY} \|y - \Bar{y}\|, \\
    \| \nabla_y F(x,y) - \nabla_y F(\Bar{x}, y)\| &\leq L_{YX} \|x - \Bar{x}\|.
\end{align*}
For brevity, we refer to \( F(x, y) \) as a smooth function with constant \( L_0 = \max(L_{XY}, L_{YX}) \) if it is cross-smooth over \( \mathcal{X} \times \mathcal{Y} \).}
\section{\edit{Assumptions and Technical preliminaries}}\label{sec2}
 \edit{Before proceeding with the solution to the VIP~\eqref{VI-main}, we begin with some assumptions and lemmas that will be used throughout the paper. We note that most of the results build on the seminal work by Nesterov \cite{nesterov2006solving} and Jaggi \cite{jaggi2013revisiting}. The following assumptions hold throughout this study.
 \begin{assumption}[Operator regularity]\label{assum1}
We assume that the solution set of VIP \eqref{VI-main} is nonempty, where $\mathcal{V}$ is a compact, convex set and the operator~$g$ satisfies the following:
\begin{enumerate}[label=(\roman*), itemsep = 0mm, topsep = 0mm, leftmargin = 7mm]
    \item L-Lipschitzness: $\quad \qquad\qquad \|g(x) - g(y)\| \leq L\|x-y\|, \quad \forall x,y \in \mathcal{V}$,
    \item $\mu$-strong monotonicity: $\qquad \langle g(x) - g(y), x - y \rangle \geq \mu \|x-y\|^2, \quad \forall x,y \in \mathcal{V}.$
\end{enumerate}
\end{assumption}
Next, we start with a minimax problem, a core component of our convergence analysis for VIP~\eqref{VI-main}. Specifically, by leveraging the following minimax oracle at each iteration, we aim to eliminate the need for costly projection steps:
 \begin{align}\label{p: min-max}
     \min\limits_{x \in \mathcal{X}}\max\limits_{y \in \mathcal{Y}} F(x,y).
 \end{align}
An important concept relevant to the convergence of the \textit{minimax} problem~\eqref{p: min-max}, which helps us measure the convergence rate of \eqref{p: min-max}, is the error function, defined as
\begin{align}\label{gap-minimax}
    h_k := F(x_k, \hat{y}_k) - F(\hat{x}_k, y_k),
\end{align}
where $\hat{x}_k = \arg\max\limits_{x\in \mathcal{X}} F(x,y_k)$ and $\hat{y}_k = \arg\max\limits_{y\in \mathcal{Y}} F(x_k,y)$. The following proposition summarizes the convergence of \eqref{p: min-max} in terms of the error function $h_k$, using the FW algorithm.}
\begin{algorithm}
  \caption{FW-minimax oracle for \eqref{p: min-max} \cite{gidel17a}}
  \label{alg:FW}
  \begin{algorithmic}[1]
      \STATE Let $z_0 = (x_0, y_0) \in \mathcal{X}\times\mathcal{Y}$
      \FOR{$k=0 \ldots T$}
      \STATE Compute the partial gradients of $F$: $r_k = \begin{pmatrix} \nabla_x F(x_k, y_k) \\ -\nabla_y F(x_k, y_k) \end{pmatrix}$,
      \STATE Compute the desired direction: $s_k = \arg\min\limits_{z \in {\mathcal{X}\times\mathcal{Y}}} \, \langle z, r_k \rangle$, \\
      \STATE Compute the stepsize: $\alpha_k = \min \left(1, \frac{\nu}{2C}\langle z_k - s_k, r_k \rangle \right)$ or $\alpha_k = \frac{2}{2+k}$ \, (based on Proposition \ref{minmax lin conv})\\
      \STATE Compute the next iteration: $z_k = (1-\alpha_k)z_k + \alpha_k s_k$.
      \ENDFOR 
\end{algorithmic}
\end{algorithm}
\begin{proposition}[\edit{FW-minimax oracle convergence {\cite[Theorem~1]{gidel17a}}}] \label{minmax lin conv}
\edit{Let $F(x,y)$ be $L_0$-smooth continuous and $(\mu_{\mathcal{X}}, \mu_{\mathcal{Y}})$ strongly convex-concave on a convex, compact set $\mathcal{X}\times\mathcal{Y}$ and $(x_*, y_*)$, the solution of \eqref{p: min-max}, is in the interior of $\mathcal{X}\times\mathcal{Y}$. Consider applying the Frank-Wolfe algorithm (Algorithms \ref{alg:FW}) for saddle point problem \eqref{p: min-max} with the stepsize 
\begin{align} \label{stepsize-FW1}
    \alpha_k = \min \Big(1, \frac{\nu}{2C}\langle z_k - s_k, r_k \rangle \Big),
\end{align}
where the constants are
\begin{align*}
\rho &= \nu^2 \frac{\sigma_\mu ^2}{2C},\quad C = \frac{L_0 D_{\mathcal{X}}^2 + L_0D_{\mathcal{Y}}^2}{2},\\
\nu &= 1 - \frac{\sqrt{2}}{\sigma_\mu} \max\Bigl\{ \frac{D_{\mathcal{X}}L_{XY}}{\sqrt{\mu_{\mathcal{Y}}}}, \frac{D_{\mathcal{Y}}L_{YX}}{\sqrt{\mu_{\mathcal{X}}}}\Bigr\},\\
\sigma_\mu &= \sqrt{\min(\mu_{\mathcal{X}} \sigma^2_{\mathcal{X}}(x_*),\mu_{\mathcal{Y}} \sigma^2_{\mathcal{Y}}(y_*))}. 
\end{align*}
Then, the convergence rate for the error $h_k$ \eqref{gap-minimax} is $\mathcal{O}\left((1-\rho)^{\frac{k}{6}}\right)$.\\
Moreover, if $\sigma_\mu > 2\max\Bigl\{\frac{D_{\mathcal{X}}L_{XY}}{\mu_{\mathcal{Y}}}, \frac{D_{\mathcal{Y}}L_{YX}}{\mu_{\mathcal{X}}}\Bigr\}$ and the stepsize is set to \( \alpha_k = \frac{2}{2 + k} \), then the error \( h_k \) \eqref{gap-minimax} decreases at a sublinear rate of \( \mathcal{O}(1/k) \).}
\end{proposition}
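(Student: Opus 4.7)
The plan is to track the saddle-gap $h_k$ directly and show it contracts geometrically. The proof will combine three ingredients: (a) a one-step Frank-Wolfe progress inequality coming from $L_0$-smoothness of $F$, (b) a quadratic-growth bound for $h_k$ around $(x_*, y_*)$ coming from the $(\mu_{\mathcal{X}},\mu_{\mathcal{Y}})$ strong convex-concavity, and (c) an ``interior scaling'' lemma that lower-bounds the FW inner product $\langle z_k - s_k, r_k\rangle$ by a positive multiple of $\|z_k - z_*\|$, exploiting the fact that $z_* = (x_*, y_*)$ lies in the interior of $\mathcal{X}\times\mathcal{Y}$.

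First I would write $z_{k+1} = (1-\alpha_k)z_k + \alpha_k s_k$ and apply the descent lemma to both the convex slice $F(\cdot, y_{k+1})$ and the concave slice $-F(x_{k+1}, \cdot)$. Summing them and using the definition of $r_k$ yields an inequality of the shape $h_{k+1} \le h_k - \alpha_k\langle z_k - s_k, r_k\rangle + C\alpha_k^2$, with $C = \tfrac{1}{2}(L_0 D_{\mathcal{X}}^2 + L_0 D_{\mathcal{Y}}^2)$ as defined in the statement. A delicate point here is that the best responses $\hat{x}_k, \hat{y}_k$ themselves move with the iterate; controlling their drift is exactly where the cross-smoothness constants $L_{XY}, L_{YX}$ enter the picture. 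Second, I would use strong convex-concavity to get the two-sided bound $h_k \geq \tfrac{\mu_{\mathcal{X}}}{2}\|x_k - \hat{x}_k\|^2 + \tfrac{\mu_{\mathcal{Y}}}{2}\|y_k - \hat{y}_k\|^2$, and then pass from $(\hat{x}_k, \hat{y}_k)$ back to $(x_*, y_*)$ by a fixed-point/contraction argument that again invokes $L_{XY}, L_{YX}$. Third, for the scaling step, since $z_*$ is interior there is a ball of radius at least $\sigma_\mu$ around the optimum in each coordinate; one can therefore replace $s_k$ by a feasible auxiliary point $\tilde{s}_k$ obtained by extending the ray from $z_k$ through $z_*$, producing an alignment $\langle z_k - s_k, r_k\rangle \geq \langle z_k - \tilde{s}_k, r_k\rangle$ whose right-hand side is a multiple of $\sigma_\mu \|r_k\|$. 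The correction factor $\nu = 1 - \frac{\sqrt{2}}{\sigma_\mu}\max\bigl\{D_{\mathcal{X}}L_{XY}/\sqrt{\mu_{\mathcal{Y}}}, D_{\mathcal{Y}}L_{YX}/\sqrt{\mu_{\mathcal{X}}}\bigr\}$ is exactly what survives after debiting the alignment bound by the cross-coupling terms, which is also what forces the proposition's implicit assumption $\nu > 0$.

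The main obstacle will be the scaling lemma and its interaction with the saddle structure: one must argue not just that $\|z_k - z_*\|$ shrinks but that the \emph{gap} $h_k$ does, which requires carefully combining both sides of the min--max and is precisely where the coupling $\nu$ is paid. Once the one-step bound $h_{k+1} \le h_k - \alpha_k\langle z_k - s_k, r_k\rangle + C\alpha_k^2$ is combined with the scaling lemma and quadratic growth, plugging in the short step $\alpha_k = \min\{1,\frac{\nu}{2C}\langle z_k - s_k, r_k\rangle\}$ yields a per-step contraction of the form $h_{k+1} \le (1-\rho')\,h_k$ with $\rho' \propto \nu^2 \sigma_\mu^2/C$; the factor $1/6$ in the stated exponent is a standard aggregation artifact, absorbing iterations where the step saturates at $\alpha_k = 1$ before the contraction regime is entered. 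Finally, under the stronger gap condition $\sigma_\mu > 2\max\bigl\{D_{\mathcal{X}}L_{XY}/\mu_{\mathcal{Y}}, D_{\mathcal{Y}}L_{YX}/\mu_{\mathcal{X}}\bigr\}$, the cross-coupling is dominated and the standard Frank-Wolfe recursion with $\alpha_k = 2/(k+2)$, telescoped against the progress inequality, delivers the $\mathcal{O}(1/k)$ sublinear rate by the usual induction on $k h_k$.
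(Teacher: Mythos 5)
The paper does not actually prove Proposition~\ref{minmax lin conv}: it is imported, up to notation, from \cite[Theorem~1]{gidel17a}, so there is no in-paper argument to compare yours against. Measured against the proof in that reference, your outline names the right three ingredients --- a one-step progress inequality for the gap $h_k$, quadratic growth of $h_k$ from strong convex-concavity, and an interior-ball argument lower-bounding the Frank--Wolfe inner product --- and the roles you assign to $C$, $\sigma_\mu$, and $\nu$ match how those constants actually arise.

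As written, however, this is a plan rather than a proof, and its one concrete inequality is not correct as stated. Applying the descent lemma to the two slices of $F$ controls $F$ at $z_{k+1}$ relative to $z_k$, not $h_{k+1}$ relative to $h_k$: by Danskin's theorem the gap $h(z)=F(x,\hat y(x))-F(\hat x(y),y)$ has gradient $\bigl(\nabla_x F(x,\hat y(x)),\,-\nabla_y F(\hat x(y),y)\bigr)$, which differs from the computed direction $r_k=\bigl(\nabla_x F(x_k,y_k),\,-\nabla_y F(x_k,y_k)\bigr)$ precisely by terms of size $L_{XY}\|y_k-\hat y_k\|$ and $L_{YX}\|x_k-\hat x_k\|$. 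Bridging that mismatch --- bounding $\|y_k-\hat y_k\|\le\sqrt{2h_k/\mu_{\mathcal{Y}}}$ (and symmetrically for $x$) and then showing the computable gap still satisfies $\langle z_k-s_k,r_k\rangle\ge\nu\,\sigma_\mu\sqrt{2h_k}$ --- is the entire technical content of the theorem and the origin of $\nu$; you flag it as ``delicate'' and then treat it as resolved. Likewise the exponent $k/6$ is not a generic aggregation artifact: in \cite{gidel17a} it comes from a separate argument that at least $k/3$ of the first $k$ iterations are ``good steps,'' each contracting $h$ by a factor $(1-\rho)^{1/2}$, which requires explicitly accounting for the iterations where the step is clipped. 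To make your sketch a proof you must carry out these two steps; otherwise the correct move is simply to cite \cite[Theorem~1]{gidel17a}, as the paper does.
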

\begin{remark}[FW-variants]
{Other FW variants, including the away-step and pairwise algorithms, achieve similar convergence guarantees under alternative assumptions to those in Proposition~\ref{minmax lin conv}, such as when the feasible set is a polytope. These variants often yield better practical performance by allowing corrections to previously chosen directions, which helps avoid flat regions and reduces zig-zagging behavior near the solution. More recently, the authors in \cite{chen2020efficient} proposed a projection-free method under assumptions similar to those in Proposition~2.2, but without requiring the solution to lie in the interior of the feasible set and their method is based on a three-loop algorithm. The proposed FW Algorithm~\ref{alg:FW} can be replaced by these alternatives. For further details, we refer interested readers to~\cite{gidel17a,chen2020efficient}.}
\end{remark}
\edit{Proposition \ref{minmax lin conv} lays the foundation for proving the convergence of the projection-free algorithm for solving VIP~\eqref{VI-main}. It avoids expensive projection steps by instead solving a strongly convex-concave \textit{minimax} problem at each iteration, which is the central motivation behind this study.}

We move forward by introducing a gap function to measure the optimality gap of VIP \eqref{VI-main} and by proposing two lemmas that are central to the development of the algorithm in this paper.\\
It is not difficult to see that, in light of the strong monotonicity of the operator $g$, the solution of VIP \eqref{VI-main}, $x^*$, satisfies the following inequality
\begin{align}
    \langle g(y), x^* - y \rangle + \frac{\mu}{2} \|y-x^*\|^2 \leq \langle g(x^*), x^* - y \rangle - \frac{\mu}{2} \|y-x^*\|^2 \leq 0, \quad \forall \, y \in \mathcal{V}.
\end{align}
In order to measure the the approximated solution of \eqref{VI-main}, we introduce a gap function $f(x)$ and the following lemma.
\begin{align}\label{gap-func}
    f(x) := \sup\limits_{y \in \mathcal{V}} \Big\{ \langle g(y), x - y \rangle + \frac{\mu}{2} \|y-x\|^2\Big\}.
\end{align}
\begin{lemma}[\edit{Gap function properties, {\cite[Theorem 1]{nesterov2006solving}}}]\label{Gap-property}
The gap function \( f(x) \) is a nonnegative, well-defined, \(\mu\)-strongly convex function on \(\mathcal{V}\) and vanishes at the unique solution of \eqref{VI-main}.
\end{lemma}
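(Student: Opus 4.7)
The plan is to verify the four claims (well-defined, nonnegative, $\mu$-strongly convex, and vanishes uniquely at $x^*$) separately, since each follows from a small, self-contained argument. Write $\phi_y(x) := \langle g(y), x-y\rangle + \frac{\mu}{2}\|y-x\|^2$, so that $f(x) = \sup_{y \in \mathcal{V}} \phi_y(x)$. Most of the work will reduce to exploiting the interplay between the quadratic term in $\phi_y$ and the strong monotonicity of $g$.

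First I would handle well-definedness: since $\mathcal{V}$ is compact and, for fixed $x$, the map $y \mapsto \phi_y(x)$ is continuous (using Lipschitz continuity of $g$ from Assumption \ref{assum1}), the supremum is attained on $\mathcal{V}$. Nonnegativity is then immediate by plugging $y = x$ into the definition, which yields $\phi_x(x) = 0$, so $f(x) \geq 0$ for every $x \in \mathcal{V}$.

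Next I would verify $\mu$-strong convexity. Fixing $y$ and expanding the squared norm, one sees that $\phi_y(x) - \frac{\mu}{2}\|x\|^2$ is affine in $x$, so each $\phi_y(\cdot)$ is $\mu$-strongly convex with the same modulus $\mu$. Taking the supremum over $y$ preserves this property, because $f(x) - \frac{\mu}{2}\|x\|^2 = \sup_{y \in \mathcal{V}}\bigl(\phi_y(x) - \frac{\mu}{2}\|x\|^2\bigr)$ is a supremum of (affine, hence) convex functions and therefore convex. Hence $f$ is $\mu$-strongly convex on $\mathcal{V}$.

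Finally, I would show that $f$ vanishes precisely at the VIP solution $x^*$. One direction is essentially the inequality that motivates the definition: strong monotonicity gives $\langle g(y), x^*-y\rangle \leq \langle g(x^*), x^*-y\rangle - \mu\|y-x^*\|^2$, and the VIP condition $\langle g(x^*), y-x^*\rangle \geq 0$ then yields $\phi_y(x^*) \leq -\frac{\mu}{2}\|y-x^*\|^2 \leq 0$ for every $y \in \mathcal{V}$, hence $f(x^*) \leq 0$; combined with nonnegativity, $f(x^*) = 0$. Uniqueness is the one place where I expect a touch of care: if $\bar{x} \in \mathcal{V}$ satisfies $f(\bar{x}) = 0$, I would take $y = x^*$ in the definition to get $f(\bar{x}) \geq \langle g(x^*), \bar{x}-x^*\rangle + \frac{\mu}{2}\|\bar{x}-x^*\|^2 \geq \frac{\mu}{2}\|\bar{x}-x^*\|^2$, using the VIP inequality at $x^*$. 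This forces $\bar{x} = x^*$. The main (mild) obstacle throughout is keeping the sign conventions consistent when combining strong monotonicity with the variational inequality, but no deeper technical hurdle is anticipated.
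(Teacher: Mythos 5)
Your proof is correct, and it follows the same route the paper takes: the displayed inequality immediately preceding the definition of $f$ in the paper is exactly your argument that $\phi_y(x^*)\leq 0$ for all $y\in\mathcal{V}$, and the remaining claims (attainment by compactness, nonnegativity via $y=x$, strong convexity as a supremum of $\mu$-strongly convex functions, and uniqueness via $y=x^*$) are the standard steps of Nesterov's Theorem~1, which the paper cites rather than reproves. No gaps.
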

\edit{Using Lemma \ref{Gap-property} and the definition \eqref{gap-func}, our goal is to minimize \( f(x) \), which is equivalent to solving the VIP \eqref{VI-main}. To this end, let us define the following quantities}
\begin{align}\label{variable-upbound}
    S_N := \sum_{i=0}^N \lambda_i, \quad \tilde{y}_N := \frac{1}{S_N} \sum_{i=0}^N \lambda_i y_i, \quad \Delta_N := \max\limits_{x \in \mathcal{V}} \biggl\{ \sum_{i=0}^N \lambda_i \Bigl[ g(y_i), y_i - x \rangle - \frac{\mu}{2} \|x-y_i\|^2 \Bigr] \biggr\},
\end{align}
where \(\{y_i\}_{i = 0}^N \subset \mathcal{V}\) and \(\{\lambda_i\}_{i = 0}^N\) are sequences of arbitrary points and positive weights, respectively. Next lemma shows the upper bound for the gap function $f(x)$. 
\begin{lemma}[\edit{Gap function upper bound, {\cite[Lemma 1]{nesterov2006solving}}}]\label{obj-f-upb} 
Using the quantities defined in \eqref{variable-upbound}, we have the inequality
\begin{align}\label{up-ineq}
    f(\tilde{y}_N) \leq \frac{1}{S_N}\Delta_N.
\end{align}    
\end{lemma}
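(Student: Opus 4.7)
The plan is to start from any candidate point $x \in \mathcal{V}$ in the supremum defining $f(\tilde{y}_N)$ and produce, for each index $i$, a pointwise lower bound on the summand in the definition of $\Delta_N$ in terms of quantities evaluated at $x$. Concretely, I would invoke $\mu$-strong monotonicity (Assumption \ref{assum1}(ii)) in the form
\begin{align*}
    \langle g(y_i), y_i - x \rangle \;\geq\; \langle g(x), y_i - x \rangle + \mu \|y_i - x\|^2,
\end{align*}
which after subtracting $\frac{\mu}{2}\|y_i - x\|^2$ on both sides yields
\begin{align*}
    \langle g(y_i), y_i - x \rangle - \tfrac{\mu}{2}\|x - y_i\|^2 \;\geq\; \langle g(x), y_i - x \rangle + \tfrac{\mu}{2}\|y_i - x\|^2.
\end{align*}

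Next I would multiply by the weight $\lambda_i \geq 0$ and sum over $i = 0, \ldots, N$. The linear-in-$y_i$ piece on the right collapses using the definition of $\tilde{y}_N$, giving $S_N \langle g(x), \tilde{y}_N - x \rangle$. For the quadratic piece, I would apply Jensen's inequality to the convex map $y \mapsto \|y - x\|^2$, obtaining
\begin{align*}
    \sum_{i=0}^N \lambda_i \|y_i - x\|^2 \;\geq\; S_N \|\tilde{y}_N - x\|^2.
\end{align*}
Combining these two reductions yields, for every $x \in \mathcal{V}$,
\begin{align*}
    \sum_{i=0}^N \lambda_i \Big[\langle g(y_i), y_i - x\rangle - \tfrac{\mu}{2}\|x - y_i\|^2 \Big] \;\geq\; S_N \Big[\langle g(x), \tilde{y}_N - x\rangle + \tfrac{\mu}{2}\|x - \tilde{y}_N\|^2 \Big].
\end{align*}

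Finally, I would take the supremum over $x \in \mathcal{V}$ on both sides. The left-hand side becomes $\Delta_N$ by definition, and the right-hand side becomes $S_N f(\tilde{y}_N)$ (after renaming the dummy variable in \eqref{gap-func}). Dividing by $S_N > 0$ gives \eqref{up-ineq}. The only delicate point is matching the sign conventions between the $-\frac{\mu}{2}\|x - y_i\|^2$ term inside $\Delta_N$ and the $+\frac{\mu}{2}\|y - x\|^2$ term inside the definition of $f$; this is exactly what the strong-monotonicity step above achieves, by converting the "wrong-sign" quadratic penalty into the "right-sign" one via an extra $\mu\|y_i - x\|^2$. No convergence or compactness arguments are needed beyond the well-definedness of the sup already established in Lemma \ref{Gap-property}.
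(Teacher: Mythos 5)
Your proof is correct, and it is essentially the argument the paper has in mind: the paper omits the proof, stating only that it ``follows from the definition of the gap function and the strong monotonicity of $g$'' and deferring to \cite[Lemma~1]{nesterov2006solving}, and your chain --- strong monotonicity to flip the sign of the quadratic term, linearity of the inner product to collapse to $\tilde{y}_N$, Jensen for the squared norm, then a supremum over $x$ --- is exactly that standard derivation. All inequality directions check out, so nothing further is needed.
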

The proof follows from the definition of the gap function \eqref{gap-func} and the strong monotonicity of the operator \( g(x) \). For brevity, we skip the proof and refer interested readers to \cite{nesterov2006solving} for further details. 
\section{Proposed Frank-Wolfe Algorithm and Convergence Analysis}\label{sec3}
\edit{Building on the key proposition and lemmas from the previous section, this section pertains to the analysis of convergence for using the FW algorithm as an oracle in solving VIP~\eqref{VI-main}. The general form of our proposed method is provided in Algorithm \ref{alg:FW-VI}, which follows the accelerated gradient descent method~\cite{nesterov2006solving}, with the difference that the projection steps are replaced by a strongly convex-concave minimax problem solved by the FW algorithm.}

Lemmas \ref{Gap-property} and \ref{obj-f-upb} shed light on the behavior of the gap function \eqref{gap-func} and highlight the goal of minimizing and controlling the growth of $\Delta_N$.
 For $\beta > 0$, consider the functions
\begin{align*}
    \psi_{y}^{\beta}(x) := \langle g(y), y - x \rangle - \frac{\beta}{2} \|x-y\|^2, \quad \Psi_{k}(x) := \sum_{i=0}^k \lambda_i \psi_{y_i}^\mu (x).
\end{align*}
We note that $\Delta_k = \max\limits_{x \in \mathcal{V}} \Psi_{k}(x)$, and the functions \( \psi_{y}^{\beta}(x) \) and \( \Psi_{k}(x) \) are strongly concave with constants \(\beta\) and \(\mu S_k\), respectively. Consider the following iterations
\begin{equation}\label{xy iter}
\left\{
\begin{aligned}
    x_k &= \, \arg\min\limits_{x_k \in \mathcal{V}}\max\limits_{x \in \mathcal{V}} \Bigl\{W(x,x_k) - \frac{\mu}{4}S_k \|x - x_k\|^2\Bigr\}, \\ 
    y_{k+1} &= \, \arg\min\limits_{y_{k+1} \in \mathcal{V}}\max\limits_{x \in \mathcal{V}} \Bigl\{\langle -g(x_k) - \beta(y_{k+1} - x_k), x - y_{k+1} \rangle - \frac{\mu}{4}\|x - y_{k+1}\|^2 \Bigr\},
\end{aligned}
\right.
\end{equation}
where
\begin{align*}
    W(x,x_k) &:= \langle \nabla \Psi_k(x_k), x - x_k \rangle = \Bigl\langle \sum_{i=0}^k \lambda_i(-g(y_i) - \mu(x_k - y_i)), x - x_k \Bigr\rangle \\
    &= -\mu S_k \langle x_k, x - x_k \rangle +  \Bigl\langle \sum_{i=0}^k \lambda_i(-g(y_i) + \mu y_i), x - x_k \Bigr\rangle.
\end{align*}
\edit{We are now in a position to derive an upper bound for \(\Delta_k\) using iterations \eqref{xy iter}, which helps us establish the convergence of the iterates to the solution of the VIP \eqref{VI-main}.}
\begin{proposition}[Upper bound of $\Delta_k$]\label{th1-delt-up}
    If $\lambda_{k+1} \leq \frac{\mu}{2\beta}S_k$, then by using the iterates \eqref{xy iter}, we have
    \begin{align}\label{upb-Delta}
        \Delta_{k+1} \leq \Delta_k + \lambda_{k+1}\Bigl[ \frac{1}{\mu+2\beta}\|g(y_{k+1}) - g(x_k)\|^2 - \frac{\beta}{2}\|y_{k+1} - x_k\|^2 \Bigr].
    \end{align}
\end{proposition}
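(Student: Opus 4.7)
The plan is to first establish that the min–max iterates in \eqref{xy iter} in fact recover the exact maximizers that appear in Nesterov's original accelerated scheme \cite{nesterov2006solving}, and then to run a completion-of-squares argument analogous to the one in that paper.

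I would begin by proving that the $x_k$-iteration selects exactly $x_k = \arg\max_{x \in \mathcal{V}} \Psi_k(x)$. Observe that the inner maximum
\[
M(x_k) := \max_{x \in \mathcal{V}} \Bigl[ \langle \nabla \Psi_k(x_k),\, x - x_k \rangle - \tfrac{\mu S_k}{4}\|x - x_k\|^2 \Bigr]
\]
is always nonnegative (plug in $x = x_k$); it equals $0$ whenever $x_k$ satisfies the VI optimality for the $\mu S_k$-strongly concave $\Psi_k$ on $\mathcal{V}$; and, conversely, $M(x_k) = 0$ forces $\langle \nabla \Psi_k(x_k), y - x_k \rangle \leq 0$ for every $y \in \mathcal{V}$, by plugging $x = x_k + t(y - x_k)$ and letting $t \downarrow 0$. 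Hence the outer minimizer is the unique point at which $M$ vanishes, namely the constrained maximizer of $\Psi_k$. The same argument applied to the affine map $y \mapsto -g(x_k) - \beta(y - x_k)$, which is the gradient of $F_k(y) := \langle g(x_k), x_k - y \rangle - \tfrac{\beta}{2}\|y - x_k\|^2$, shows that $y_{k+1} = \arg\max_{y \in \mathcal{V}} F_k(y)$.

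With these identifications, the $\mu S_k$-strong concavity of $\Psi_k$ together with $x_k = \arg\max \Psi_k$ yields $\Psi_k(x) \leq \Delta_k - \tfrac{\mu S_k}{2}\|x - x_k\|^2$ for every $x \in \mathcal{V}$, hence
\[
\Delta_{k+1} \leq \Delta_k + \max_{x \in \mathcal{V}} \Bigl[ -\tfrac{\mu S_k}{2}\|x - x_k\|^2 + \lambda_{k+1}\,\psi_{y_{k+1}}^{\mu}(x) \Bigr].
\]
I would then invoke the VI optimality of $y_{k+1}$, which gives $\langle g(x_k), y_{k+1} - x \rangle \leq \beta \langle y_{k+1} - x_k, x - y_{k+1} \rangle$ for all $x \in \mathcal{V}$, and combine it with the split $\langle g(y_{k+1}), y_{k+1} - x \rangle = \langle g(x_k), y_{k+1} - x \rangle + \langle g(y_{k+1}) - g(x_k), y_{k+1} - x \rangle$ to trade the $g(y_{k+1})$ inner product for a $\|y_{k+1} - x_k\|^2$ term plus a residual in $g(y_{k+1}) - g(x_k)$.

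Setting $c := y_{k+1} - x_k$, $d := g(y_{k+1}) - g(x_k)$, and $u := x - y_{k+1}$, the bracket becomes a concave quadratic in $u$ with curvature $-\mu S_{k+1}$. Maximizing it over unconstrained $u$ (which only inflates the upper bound) and then applying Young's inequality to the resulting cross term $\langle c, d \rangle$ is the key algebraic move. The hypothesis $\lambda_{k+1} \leq \tfrac{\mu}{2\beta} S_k$ is exactly the threshold that makes the coefficient of $\|c\|^2$ absorb into $-\tfrac{\lambda_{k+1}\beta}{2}\|c\|^2$ while simultaneously forcing the coefficient of $\|d\|^2$ to collapse to $\tfrac{\lambda_{k+1}}{\mu + 2\beta}$. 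Getting these constants right in the completion-of-squares is the principal technical obstacle; the remaining steps are either structural (the min–max-to-$\arg\max$ identification of the iterates) or direct consequences of strong concavity and the optimality conditions of $x_k$ and $y_{k+1}$.
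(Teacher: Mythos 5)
Your proposal is correct and follows essentially the same route as the paper's proof: identify the minimax iterates in \eqref{xy iter} with the constrained maximizers (equivalently, projections), linearize $\Psi_k$ via strong concavity at $x_k$, invoke the optimality condition of $y_{k+1}$, and complete the square, with the hypothesis $\lambda_{k+1}\le\frac{\mu}{2\beta}S_k$ absorbing the $\|x-x_k\|^2$ term. Your use of the exact optimality conditions (rather than the relaxed ones carrying the extra $\tfrac{\mu}{4}\|\cdot\|^2$ slack built into \eqref{xy iter}) even yields the marginally sharper coefficient $\tfrac{1}{2(\mu+\beta)}$ in place of $\tfrac{1}{\mu+2\beta}$, which still implies \eqref{upb-Delta}.
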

\begin{proof}
    We know that $\Psi_{k+1}(x) = \Psi_{k}(x) + \lambda_{k+1}\psi_{y_{k+1}}^{\mu}(x)$. Then, we have
    \begin{align}
        \Delta_{k+1} &= \max\limits_{x \in \mathcal{V}} \Bigl\{\Psi_{k}(x) + \lambda_{k+1}\psi_{y_{k+1}}^{\mu}(x)\Bigr\} \nonumber\\
        &\leq \Delta_k + \max\limits_{x\in \mathcal{V}} \Bigl\{\langle \nabla \Psi_k(x_k), x - x_k \rangle - \frac{\mu}{2}S_k \|x - x_k\|^2 +\lambda_{k+1}\psi_{y_{k+1}}^{\mu}(x)\Bigr\} \nonumber\\
        &\leq \Delta_k + \max\limits_{x\in \mathcal{V}} \Bigl\{W(x,x_k) - \frac{\mu}{4}S_k \|x - x_k\|^2\Bigr\} + \max\limits_{x\in \mathcal{V}} \Bigl\{- \frac{\mu}{4}S_k \|x - x_k\|^2 +\lambda_{k+1}\psi_{y_{k+1}}^{\mu}(x)\Bigr\}. \nonumber 
    \end{align}
    Note that, if we consider $x^\dagger = \arg\max\limits_{x \in \mathcal{V}} \Psi_k(x) = \mathrm{proj}_{x \in \mathcal{V}}\left(\frac{\sum_{i=0}^k \lambda_i\bigl(\mu y_i - g(y_i)\bigr)}{\mu S_k}\right)$, then, from the definition of the projection operator and the first-order optimality condition, we have $W(x,x^\dagger) = \langle \nabla \Psi_k(x^\dagger), x - x^\dagger \rangle \leq 0$. Therefore, we can conclude that, as indicated in Proposition \ref{minmax lin conv},  the iterates $x_k$ in \eqref{xy iter} ensures $\min\limits_{x_k \in \mathcal{V}}\max\limits_{x \in \mathcal{V}} \Bigl\{W(x,x_k) - \frac{\mu}{4}S_k \|x - x_k\|^2\Bigr\} < 0$. We further note that this strongly convex-concave minimax problem can be solved by FW algorithm with (sub)-linear convergence guarantee. Therefore, by the definition of \(x_k\) in \eqref{xy iter}, we arrive at
        \begin{align} \label{eq1-th1}
        \Delta_{k+1} &\leq \Delta_k + \overbrace{\min\limits_{x_k \in \mathcal{V}}\max\limits_{x\in \mathcal{V}} \Bigl\{W(x,x_k) - \frac{\mu}{4}S_k \|x - x_k\|^2\Bigr\}}^{\leq 0} + \max\limits_{x\in \mathcal{V}} \Bigl\{- \frac{\mu}{4}S_k \|x - x_k\|^2 +\lambda_{k+1}\psi_{y_{k+1}}^{\mu}(x)\Bigr\} \nonumber \\
        & \leq \Delta_k + \max\limits_{x\in \mathcal{V}} \Bigl\{- \frac{\mu}{4}S_k \|x - x_k\|^2 +\lambda_{k+1}\Bigl[\langle g(y_{k+1}), y_{k+1} - x \rangle - \frac{\mu}{2} \|x-y_{k+1}\|^2 \Bigr] \Bigr\}.
    \end{align}
    Now, let us analyze the second term in the ``$\max$" part of the right-hand-side of \eqref{eq1-th1}. Note that 
    \begin{align}
        \langle g(y_{k+1}), y_{k+1} - x \rangle -& \frac{\mu}{2} \|x-y_{k+1}\|^2 = \langle g(y_{k+1}) - g(x_k), y_{k+1} - x \rangle - \frac{\mu}{2} \|x-y_{k+1}\|^2 + \langle g(x_k), y_{k+1} - x \rangle, \nonumber
    \end{align}
    Similar to the previous part, if $y^\dagger$ considered as \( y^\dagger = \mathrm{proj}_{x \in \mathcal{V}}\left(x_k - \frac{g(x_k)}{\beta} \right) \), we have
    \begin{align*}
        \langle -g(x_k) - \beta(y^\dagger - x_k), x - y^\dagger \rangle \leq 0, \quad \forall x \in \mathcal{V}.
    \end{align*}
    Therefore, $y_{k+1}$ in \eqref{xy iter} ensures $\min\limits_{y_{k+1} \in \mathcal{V}}\max\limits_{x \in \mathcal{V}} \Bigl\{\langle -g(x_k) - \beta(y_{k+1} - x_k), x - y_{k+1} \rangle - \frac{\mu}{4}\|x - y_{k+1}\|^2 \Bigr\} < 0$, and using the definition of $y_{k+1}$ we can write
    \begin{align}
        \langle g(y_{k+1}), y_{k+1} - x \rangle -& \frac{\mu}{2} \|x-y_{k+1}\|^2 \nonumber\\
        &\leq \|g(y_{k+1}) - g(x_k)\| \|y_{k+1} - x\| - \frac{\mu}{4}\|y_{k+1} - x\|^2 + \beta \langle y_{k+1} - x_k, x - y_{k+1} \rangle, \nonumber
    \end{align}
    using above inequality and maximizing the right-hand-side based on $\|y_k - x\|$, one obtains
    \begin{align}
       \langle g(y_{k+1}), y_{k+1} - x \rangle - \frac{\mu}{2} \|x-y_{k+1}\|^2 \leq \frac{1}{\mu+2\beta}\|g(y_{k+1}) - g(x_k)\|^2 + \frac{\beta}{2}\|x - x_k\|^2 - \frac{\beta}{2}\|y_{k+1} - x_k\|^2. \nonumber
    \end{align}
    Putting the above inequality together with \eqref{eq1-th1}, and using the upper bound on \(\lambda_{k+1}\), we obtain the inequality \eqref{upb-Delta}.
\end{proof}
Now by setting \(\lambda_{k+1} = \frac{\mu}{2L} S_k = \frac{1}{2\gamma}S_k\) in \eqref{upb-Delta}, where \(L\) and \(\gamma = \frac{L}{\mu}\) are the Lipschitz constant and the condition number of the operator \(g\), respectively, we have \(\Delta_{k+1} < \Delta_k\). We are now ready to show the non-convergence rate of Algorithm \ref{alg:FW-VI} for solving VIP \eqref{VI-main}.
\begin{algorithm}
  \caption{Frank-Wolfe type algorithm for the VIP (FW-VIP)}
  \label{alg:FW-VI}
  \begin{algorithmic}[1]
      \STATE Given $\gamma = \frac{L}{\mu}, \lambda_0 = 1$, $\varepsilon$.
      \FOR{$k=0, \ldots, T$}
      \vspace{.2cm}
      \STATE \hspace{-.25cm}$\left\{
       \begin{array}{l}
        x_k = \arg\min\limits_{x_k \in \mathcal{V}}\max\limits_{x \in \mathcal{V}} \Bigl\{W(x,x_k) - \frac{\mu}{4}S_k \|x - x_k\|^2\Bigr\}\\
       y_{k+1} = \arg\min\limits_{y_{k+1} \in \mathcal{V}}\max\limits_{x \in \mathcal{V}} \Bigl\{\langle -g(x_k) - L(y_{k+1} - x_k), x - y_{k+1} \rangle - \frac{\mu}{4}\|x - y_{k+1}\|^2 \Bigr\}
      \end{array} \right\} {\tiny \begin{array}{c} \text{FW-minimax oracle~\ref{alg:FW}} \\ \text{with $\varepsilon$-precision} \end{array}}$
      \STATE $\lambda_{k+1} = \frac{1}{2\gamma}S_k$,
      \ENDFOR 
      \STATE \textbf{output:}\, $\tilde{y}_{k} = \frac{1}{S_k}\sum_{i=0}^k \lambda_i y_i$.
\end{algorithmic}
\end{algorithm}
\begin{theorem}[VIP-convergence via FW-minimax oracle]\label{th: vi-conv}
Consider the VIP~\eqref{VI-main} under Assumptions~\ref{assum1}, and the FW-minimax oracle~\ref{alg:FW}. Let $\gamma = L/\mu$ be the condition number of the operator $g$, and $f$ the gap function defined in~\eqref{gap-func}. Algorithm~\ref{alg:FW-VI} returns an $\varepsilon$ accurate solution to \eqref{VI-main} if the number of iterations is $T \geq (\gamma + 1) \log\left( {\gamma^2 f(x_0)}/{\varepsilon} \right)$.
\end{theorem}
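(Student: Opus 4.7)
The plan is to bound $f(\tilde{y}_N)$ by combining Lemma~\ref{obj-f-upb} (which gives $f(\tilde{y}_N) \leq \Delta_N/S_N$) with the recursion from Proposition~\ref{th1-delt-up}, after which I need to show $\Delta_N$ stays essentially bounded by $f(x_0)$ while $S_N$ grows geometrically in $N$. Since $\lambda_0 = 1$ and $y_0 = x_0$, the definition~\eqref{gap-func} immediately gives the base case $\Delta_0 = f(x_0)$.

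I would apply Proposition~\ref{th1-delt-up} with the algorithmic choices $\beta = L$ and $\lambda_{k+1} = \frac{1}{2\gamma}S_k$, which satisfy the hypothesis $\lambda_{k+1} \leq \frac{\mu}{2\beta}S_k$. By $L$-Lipschitzness of $g$, the bracketed term in~\eqref{upb-Delta} is bounded by
\[
\Bigl(\tfrac{L^2}{\mu+2L}-\tfrac{L}{2}\Bigr)\|y_{k+1}-x_k\|^2 = -\tfrac{L\mu}{2(\mu+2L)}\|y_{k+1}-x_k\|^2 \leq 0.
\]
Hence, if the two inner minimax subproblems in~\eqref{xy iter} were solved exactly, one would obtain the monotonicity $\Delta_{k+1}\leq\Delta_k$. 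Since the FW-minimax oracle only returns an $\varepsilon_{\mathrm{in}}$-accurate solution, revisiting the proof of Proposition~\ref{th1-delt-up} shows that each ``$\min\max$'' step contributes an additive error of order $\varepsilon_{\mathrm{in}}$ (with weights depending on $\lambda_{k+1}$ and $S_k$), and telescoping yields an inequality of the schematic form $\Delta_N \leq f(x_0) + O(S_N\,\varepsilon_{\mathrm{in}})$.

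From $\lambda_{k+1} = \frac{1}{2\gamma}S_k$ it follows that $S_N = (1+\frac{1}{2\gamma})^N$, and the standard estimate $\log(1+t)\geq t/(1+t)$ gives $\log S_N \geq N/(2\gamma+1)$, i.e., geometric growth with rate comparable to $1/(\gamma+1)$. Plugging everything into Lemma~\ref{obj-f-upb} produces
\[
f(\tilde{y}_N) \;\leq\; \frac{\Delta_N}{S_N} \;\leq\; \frac{f(x_0)}{S_N} + O(\varepsilon_{\mathrm{in}}).
\]
Choosing the inner FW precision as $\varepsilon_{\mathrm{in}} = \Theta(\varepsilon/\gamma^2)$ absorbs the oracle error into $\varepsilon/2$, after which the requirement $f(x_0)/S_N \leq \varepsilon/2$ forces $S_N \gtrsim \gamma^2 f(x_0)/\varepsilon$; inverting the exponential gives exactly $T \gtrsim (\gamma+1)\log(\gamma^2 f(x_0)/\varepsilon)$, which is the claimed iteration complexity.

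The main obstacle I expect is the careful bookkeeping of the FW-minimax oracle inexactness through the proof of Proposition~\ref{th1-delt-up}. That proposition's proof crucially used that both inner ``$\min\max$'' values are nonpositive, but an $\varepsilon_{\mathrm{in}}$-oracle only guarantees them to be at most $\varepsilon_{\mathrm{in}}$; since the accompanying weights $\lambda_{k+1}$, $S_k$ grow geometrically, $\varepsilon_{\mathrm{in}}$ must be chosen small enough that the accumulated error does not overwhelm the geometric decay of $f(x_0)/S_N$. This balancing is precisely what produces the $\gamma^2$ factor inside the logarithm. Everything else (the telescoping of $\Delta_k$, the $\log/\exp$ estimates for $S_N$, and the algebraic combination) is routine once this accuracy trade-off is fixed.
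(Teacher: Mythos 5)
Your outer skeleton matches the paper's proof: apply Proposition~\ref{th1-delt-up} with $\beta=L$ and $\lambda_{k+1}$ proportional to $S_k$, use $L$-Lipschitzness to make the bracket in \eqref{upb-Delta} nonpositive so that $\Delta_{k+1}\le\Delta_k$, and then combine $f(\tilde y_N)\le\Delta_N/S_N$ with the geometric growth of $S_N$. But there is a genuine gap at the base case: the claim that ``the definition~\eqref{gap-func} immediately gives $\Delta_0=f(x_0)$'' is false. By \eqref{variable-upbound}, $\Delta_0=\max_{x\in\mathcal V}\bigl\{\langle g(x_0),x_0-x\rangle-\tfrac{\mu}{2}\|x-x_0\|^2\bigr\}$, whereas $f(x_0)=\sup_{y\in\mathcal V}\bigl\{\langle g(y),x_0-y\rangle+\tfrac{\mu}{2}\|y-x_0\|^2\bigr\}$: the operator is evaluated at the maximization variable in one and at the fixed point in the other, and the quadratic enters with opposite signs. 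Strong monotonicity in fact gives the inequality in the \emph{unhelpful} direction, $f(x_0)\le\Delta_0$. The paper's proof spends its entire second half establishing $\Delta_0\le\gamma^2 f(x_0)$: it splits $g(x_0)=\bigl(g(x_0)-g(x^*)\bigr)+g(x^*)$, maximizes the quadratic against the Lipschitz bound to produce $\tfrac{L^2}{2\mu}\|x^*-x_0\|^2=\gamma^2\tfrac{\mu}{2}\|x^*-x_0\|^2$, and then compares with $f(x_0)$ evaluated at $y=x^*$. This is precisely where the $\gamma^2$ inside the logarithm comes from; your attribution of that factor to the inner-oracle accuracy balancing is a misdiagnosis, and as written your chain would only yield $T\gtrsim(2\gamma+1)\log(f(x_0)/\varepsilon)$ plus an unquantified inner-precision condition, not the stated bound.

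Two smaller points. First, with $\lambda_{k+1}=\tfrac{1}{2\gamma}S_k$ you correctly get $S_N=(1+\tfrac{1}{2\gamma})^N$ and hence a rate $1/(2\gamma+1)$, not $1/(\gamma+1)$; the paper's own proof silently uses $S_{k+1}=(1+\tfrac{1}{\gamma})S_k$, which is inconsistent with Algorithm~\ref{alg:FW-VI} by a factor of $2$, so your version is arguably the more faithful one but does not yield the constant ``exactly.'' Second, your error-propagation sketch for the inexact FW-minimax oracle ($\Delta_N\le\Delta_0+O(S_N\varepsilon_{\mathrm{in}})$) is a reasonable and more careful route than the paper, which simply requires the inner min-max values to be driven negative and otherwise treats the subproblems as exact; however, you present it only schematically, and it would need to be carried out before the proof is complete.
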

Before proceeding with the proof of the theorem, we summarize the overall convergence complexity of using Algorithm~\ref{alg:FW-VI} with the FW-minimax oracle~\ref{alg:FW} as follows.
\begin{remark}[Overall complexity of VIP solution]
    The overall complexity of the solution to the VI problem~\eqref{VI-main} is the product of the complexity of the FW-minimax oracle in Algorithm~\ref{alg:FW} with the complexity of Algorithm~\ref{alg:FW-VI}, under the assumption that the solution of each sub-minimax problem lies inside the feasible domain $\mathcal{V}$. In particular, by using the stepsize~\eqref{stepsize-FW1} in Algorithm~\ref{alg:FW}, , resulting in
\begin{align*}
           \underbrace{\mathcal{O}\left(\log \left({1}/{\varepsilon}\right)\right)}_{\tiny{\text{Theorem \ref{th: vi-conv}}}} \cdot \underbrace{\mathcal{O}\left(\log \left({1}/{\varepsilon}\right)\right)}_{\tiny{\text{Proposition \ref{minmax lin conv}}}} = \mathcal{O}\left(\log^2 \left({1}/{\varepsilon}\right)\right).
\end{align*}
    Moreover, by leveraging the diminishing stepsize $\alpha_t = \tfrac{2}{2+t}$, if the extra assumption of Proposition~\ref{minmax lin conv} is satisfied, the overall complexity of solving the problem becomes 
    \begin{align*}
           \underbrace{\mathcal{O}\left(\log \left({1}/{\varepsilon}\right)\right)}_{\tiny{\text{Theorem \ref{th: vi-conv}}}} \cdot \underbrace{\mathcal{O}\left({1}/{\varepsilon}\right)}_{\tiny{\text{Proposition \ref{minmax lin conv}}}} = {\mathcal{O}}\left({\log{\left(1/\varepsilon\right)}}/{\varepsilon}\right).
\end{align*}
\end{remark}
\begin{proof}[Proof of Theorem~\ref{th: vi-conv}]
Using the definition of \(S_k\) in \eqref{variable-upbound} and the fact that \(S_0 = \lambda_0 = 1\), we can write
    \begin{align*}
        S_{k+1} = S_k + \lambda_{k+1} = \left(1+\frac{1}{\gamma}\right)S_k.
    \end{align*}
    By using Lemma \ref{obj-f-upb}, the corollary of {Proposition}~\ref{th1-delt-up}, and using the sequence $S_{k+1} = \left(1 + \frac{1}{\gamma}\right) S_k$ with $S_0 = 1$, we can conclude that $S_k = \left(1 + \frac{1}{\gamma}\right)^k$. Therefore, we obtain
    \begin{align*}
        f(\tilde{y}_{k}) \leq \frac{1}{S_k}\Delta_k \leq \frac{\Delta_0}{\left(1 + \frac{1}{\gamma}\right)^k} = \left(\frac{\gamma}{1+\gamma}\right)^k \Delta_0  = \left(1 - \frac{1}{1+\gamma}\right)^k \Delta_0 \leq \Delta_0 e^{-\frac{k}{\gamma+1}}.
    \end{align*}
    Now, we just need to approximate $\Delta_0$. Using the definition of $\Delta$ and $\Psi$, we can write
    \begin{align*}
        \Delta_0 &= \max\limits_{x \in \mathcal{V}} \Psi_0(x) = \max\limits_{x \in \mathcal{V}} \psi_{y_0}^{\mu}(x) = \max\limits_{x \in \mathcal{V}} \Bigl\{\langle g(x_0), x_0 - x \rangle -\frac{\mu}{2}\|x - x_0\|^2 \Bigr\} \\
        &= \max\limits_{x \in \mathcal{V}} \Bigl\{\langle g(x_0) - g(x^*), x_0 - x \rangle + \langle g(x^*), x_0 - x \rangle -\frac{\mu}{2}\|x - x_0\|^2 \Bigr\} \\
        &\leq \langle g(x^*), x_0 - x \rangle + \max\limits_{x \in \mathcal{V}} \Bigl\{\langle g(x_0) - g(x^*), x_0 - x \rangle -\frac{\mu}{2}\|x - x_0\|^2 \Bigr\} \leq \langle g(x^*), x_0 - x \rangle + \frac{L^2}{2\mu}\|x^* - x_0\|^2.
    \end{align*}
Then, by the definition and strong convexity of the gap function \(f\) in \eqref{gap-func}, the last inequality is less than \(f(x_0)\). Thus, the theorem's statement is concluded.
\end{proof}

\edit{We close this section by noting that the total computational time for solving the VIP \eqref{VI-main} is the product of the convergence complexity of the considered algorithm and the complexity of the mathematical operations, e.g., projection(s) used at each iteration, which can be computationally demanding in some problems. In the next section, we see this behavior and observe the effectiveness of our proposed projection-free method compared to commonly used state-of-the-art algorithms for strongly monotone VIP.} 
\section{Numerical Results}\label{simulation}
We demonstrate the performance of Algorithm \ref{alg:FW-VI} on the traffic assignment problem, an important problem studied in the transportation and operations research literature. To evaluate our performance, we compare our proposed algorithm (FW-VIP) with projected gradient descent (PGD) \cite{nemirovskij1983problem} and Nesterov's accelerated gradient method (NAG) \cite{nesterov2006solving}, two methods commonly used for solving strongly monotone VIP in the literature. Before proceeding with the simulation results, let us first briefly explain the traffic assignment problem.

\textbf{Traffic Assignment Problem (TAP):} Assume $N$ be the set of nodes, $\mathcal{E}$ be set of directed links, and $W$ denotes the set of origin-destination (OD) pairs. We assume for all $w \in W$ there is $d_w > 0$ representing traffic demand entering from origin and exiting from destination. For each OD pair $w \in W$, the demand $d_w$ is to be distributed among a given $P_w$ of a paths joining $w$. Defining $x_p$ that shows the flow carried by path $P$, we introduce the feasible path flow vector $x = \{x_p \, | \, p \in P_w, \, w\in W \}$, and the set corresponding to feasible path flow vector as
\begin{align*}
    X = \Bigl\{x\,|\, \sum_{p \in P_w} x_p = d_w, \, x_p \geq 0, \, \forall p \in P_w , \, w \in W \Bigr\}.
\end{align*}
Each collection of path flows $x \in X$ defines a collection of links flows $y_{ij}$ with
\begin{align}\label{arc-chain def}
    y_{ij} = \sum_{w \in W} \sum_{p \in P_w} \delta_p (i,j) x_p, \quad (i,j) \in \mathcal{E},
\end{align}
where $\delta_p (i,j) = 1$ if path $p$ contains link $(i,j)$ and zero otherwise. The vector of links flows $y = \{y_{ij}\, | \, (i,j) \in \mathcal{E}\}$ can be written as $y = Ax$ where $A$ is the arc-chain matrix defined by \eqref{arc-chain def}. Similarly, we can define the set of feasible link flows as $Y = AX = \Bigl\{y\,|\, y = Ax, \, x \in X \Bigr\}$.
For each link $(i,j) \in \mathcal{E}$, there is a given function $T_{ij}: \, Y \rightarrow \mathbb{R}$, which is the measure of the delay in link $(i,j)$. The vector with components $T_{ij}(y)$ is $T(y)$ and due to the choice of $T_{ij}$ it is mostly strongly monotone operator. For each $x \in X$ and corresponding $y = Ax$, the vector $T(y)$ defines the following function for each $w \in W$ and $p \in P_w$
\begin{align*}
    \Bar{T}_p (x) = \sum_{(i,j) \in \mathcal{E}} \delta_p (i,j) T_{ij}(y),
\end{align*}
which shows the total travel time of path $p$. Then, TAP is to find $x^* \in X$ such that for all $\Bar{p} \in P_w$ and $w \in W$ we have
\begin{align}\label{TAP}
    \Bar{T}_{\Bar{p}}(x^*) = \min\limits_{p \in P_w} \, \Bar{T}_{p}(x^*) \quad \text{and} \quad {x_p ^*} > 0.
\end{align}
As shown in \cite{dafermos1980traffic}, if $\Bar{T}(x)$ denotes the vector with $\Bar{T}_p (x)$ components, then $\Bar{T}(x) = A^\top T(Ax)$ and TAP \eqref{TAP} can be reformulated as the following VIP:
\begin{align}\label{TAP-VIP}
    \text{find $x^*$ such that} \quad (x - x^*)^\top \Bar{T}(x^*) \geq 0, \quad \forall x \in X.
\end{align}
Note that $A^\top T(Ax)$ is not necessarily strongly monotone unless $A^\top A$ is inevitable (as is the case in our simulation).

We consider TAP with the same structure in \cite{bertsekas2009projection} which can be viewed as model of a circular highway. Figure \ref{fig:TAP} illustrates the corresponding model, in which we consider five OD points, numbered 1--5, and five OD pairs: (1,4), (2,5), (3,1), (4,2), and (5,3), associated with the OD points. Each OD pair is linked with two possible paths (clockwise and counterclockwise). The links, types of links, time delays on each link, and the demand for each OD pair are given in Table \ref{table2}, where $h(x) = 1+x+x^2$ and we set $\kappa = 0.5$. Note that we assume flows cannot utilize the existing ramp that does not lead to its intended destination. Figure \ref{fig:VI-Gap} shows the optimality gap of the corresponding VIP with parameters reported in Table \ref{table2}. We note that, for a fair comparison and to demonstrate the effectiveness of the projection-free algorithm, we plot the optimality gap versus the total iterations multiplied by the total number of projections (in PGD or NAG) or LMO (in Algorithm \ref{alg:FW-VI}). We also note that both projection and LMO operators are evaluated using the same solver, namely \texttt{OSQP} \cite{osqp}.
\begin{table}[H]
    \centering
    \caption{Parameters in TAP.}
   \begin{tabular}{c c}\hline 
    \textbf{Types of links} & \\
    (1) Highway links: & 17, 27, 37, 47, 57, 18, 28, 38, 48, 58.\\
    \hspace{-5mm}(2) Exit ramps: & 14, 24, 34, 44, 54, 12, 22, 32, 42, 52. \\
    \hspace{3mm}(3) Entrance ramps: & 11, 21, 31, 41, 51, 13, 23, 33, 43, 53.\\
   \hspace{-3mm}(4) Bypass links: & \, 15, 25, 35, 45, 55, 16, 26, 36, 46, 56. \vspace{.2cm} \\
    \hline & \\[-1.0em]
    \textbf{Delay on links} & \\ \vspace{.1cm}
    (1) Delay on highway link $k$: & \hspace{2mm}$10\cdot h\Big[\text{flow on } k\Big] + 2\kappa \cdot h\Big[\text{flow on exit ramp from }k\Big]$.  \\ \vspace{.1cm}
    \hspace{-5mm}(2) Delay on exit ramp $k$: & \hspace{-64.8mm} $h\Big[\text{flow on }k\Big]$. \\ \vspace{.1cm}
    \hspace{3mm}(3) Delay on entrance ramp $k$: & \hspace{10mm} $h\Big[\text{flow on }k\Big] + \kappa \cdot h\Big[\text{flow on bypass link merging with } k\Big]$.  \\
    \hspace{-3mm}(4) Delay on bypass link $k$: & \hspace{-63mm} $h\Big[\text{flow on }k\Big]$.  \vspace{.2cm}\\
    \hline & \\[-1.0em]
    \textbf{Demands on OD pairs} & $d(1,4) = 0.1$, $d(2,5) = 0.2$, $d(3,1) = 0.3$, $d(4,2) = 0.4$, $d(5,3) = 0.5$.
       \\[.5em]
        \hline
    \end{tabular}
   \label{table2}
\end{table}
\begin{minipage}{\linewidth}
    \centering
    \captionsetup{justification=centering}
    \begin{minipage}{0.40\linewidth}
        \begin{figure}[H]
        \includegraphics[scale=0.38]{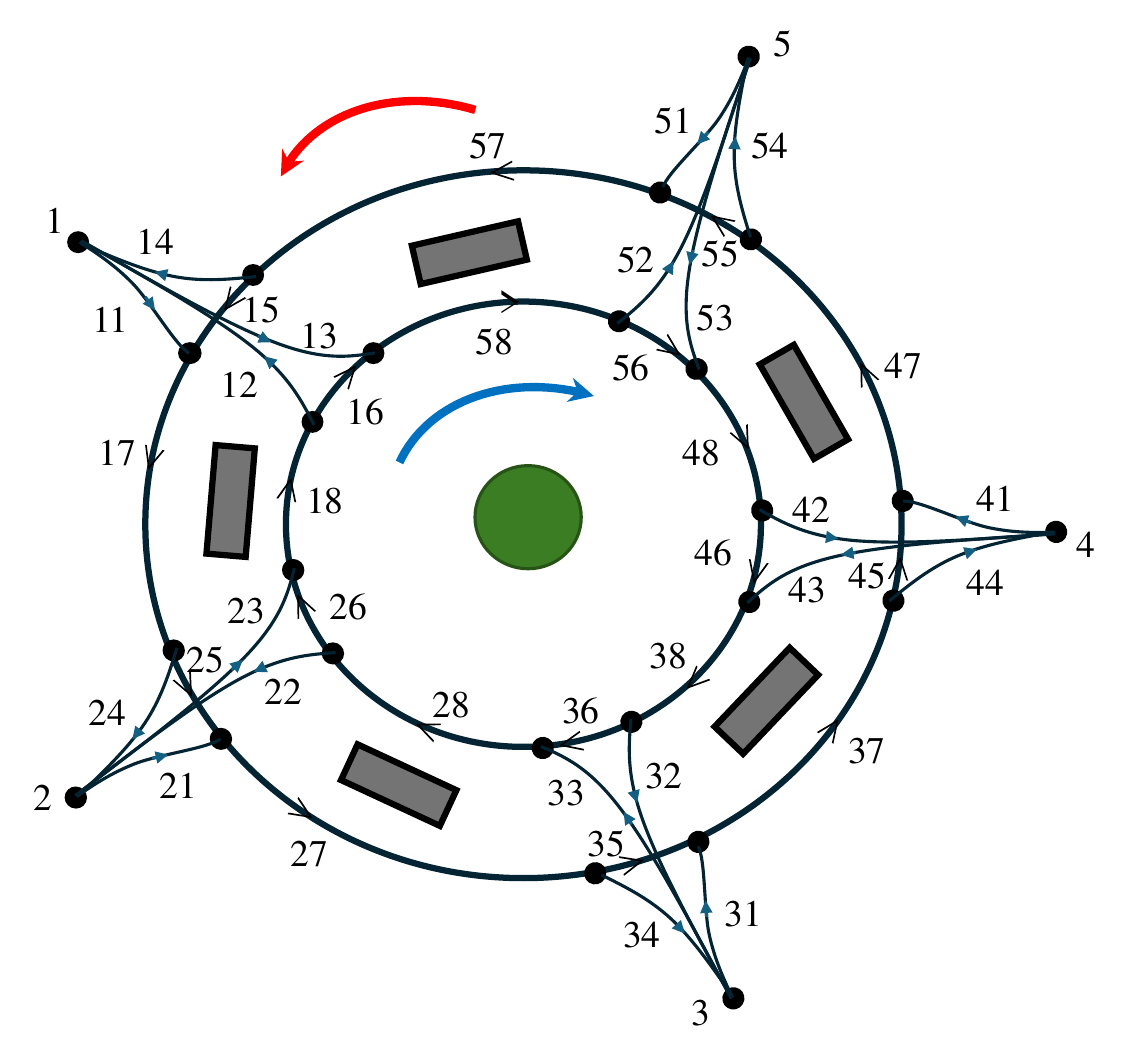}
        \caption{TAP model (Figure 2 in \cite{bertsekas2009projection}).}
        \label{fig:TAP}
    \end{figure}
   \end{minipage}
      \hspace{0.05\linewidth}
\begin{minipage}{0.5\linewidth}
\centering
\captionsetup{justification=centering}
    \begin{figure}[H]
    \includegraphics[scale=0.40]{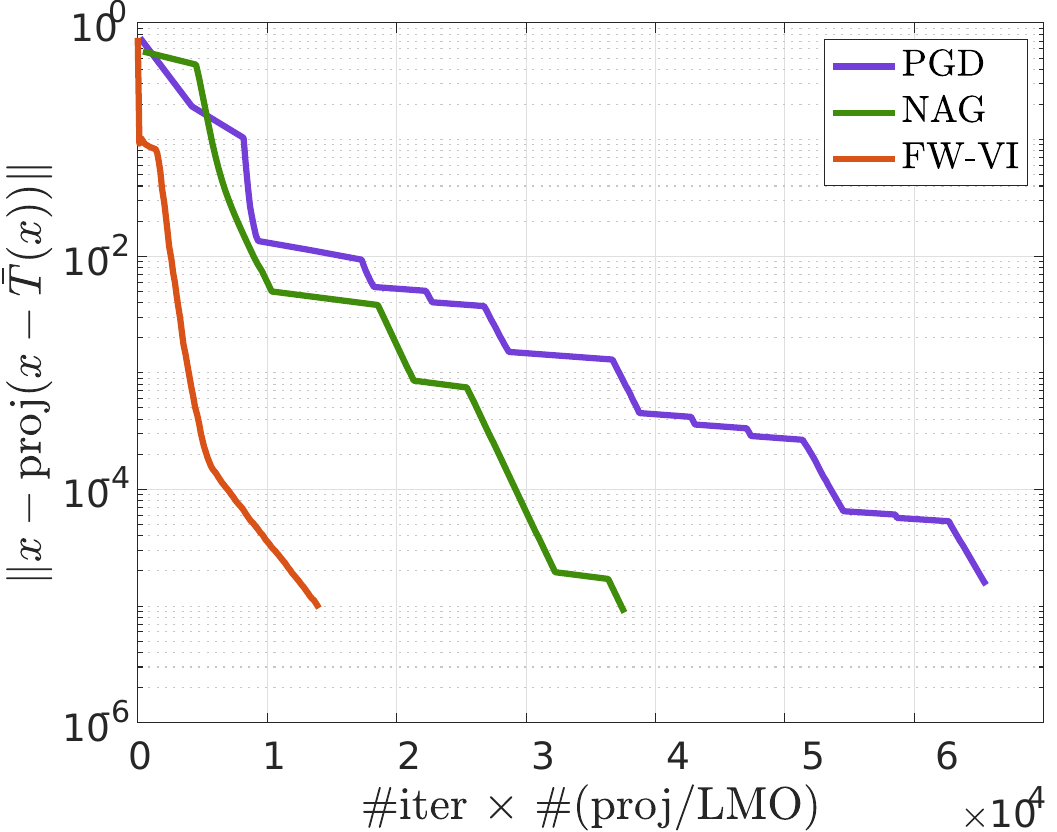}
    \caption{Optimality gap of the VIP in the TAP.}
    \label{fig:VI-Gap}
    \end{figure}
    \end{minipage}
\end{minipage}
\section{Further Discussion, Limitations, and Future Directions}\label{conclussion}
In this section, we provide additional insights concerning the proposed Frank-Wolfe technique for solving strongly monotone variational inequalities~(1), its limitations, and potential future directions.

\begin{enumerate}[label=(\roman*), itemsep = 0mm, topsep = 0mm, leftmargin = 7mm]

\item \textbf{Further insights into the Frank-Wolfe technique for solving VIP~\eqref{VI-main}:} The main motivation behind the Frank-Wolfe technique is to avoid expensive projections in each iteration and to select a more suitable descent direction compared to projected methods, which may exhibit zig-zagging behavior for certain constraint sets. This can lead to lower computational complexity and potentially faster convergence. Furthermore, as stated in Proposition~\ref{th1-delt-up}, we only require the negativity of 
\begin{align*}
N(x,y) := \min\limits_{x_k \in \mathcal{V}}\max\limits_{x \in \mathcal{V}} \Bigl\{ W(x,x_k) - \tfrac{\mu}{4} S_k \|x - x_k\|^2 \Bigr\},
\end{align*}
to guarantee the convergence of Algorithm~\ref{alg:FW-VI}. While this negativity is ensured by the worst-case convergence bound in Proposition~\ref{minmax lin conv}, in practice the inner Frank-Wolfe problem can be terminated as soon as $N(x,y)$ becomes negative, significantly reducing computational complexity.

\item \textbf{Extension to monotone variational inequalities:} Solving the VIP using a projection-free algorithm requires solving a minimax subproblem. Unfortunately, existing projection-free methods for minimax problems rely on strong convexity-concavity of the underlying function, which in our setting can be ensured by assuming strong monotonicity of the operator in the VIP. Developing a Frank-Wolfe method for minimax problems without this assumption is a key step toward extending these methods to general monotone variational inequality problems and is an important direction for future work.

\item \textbf{Relation to prior works and alternative assumptions:} Based on the structure of the VIP, alternative assumptions can be applied for the Frank-Wolfe algorithm in solving the inner minimax problems, replacing the requirement in Proposition~\ref{minmax lin conv} that the solution lies strictly within the domain:
\begin{itemize}
    \item The sets $\mathcal{X}$ and $\mathcal{Y}$ are polytopes (case~(II) in Theorem~1 \cite{gidel17a}): In this case, the interior-point assumption is not required. Instead, we can use the away-step Frank-Wolfe algorithm (Algorithm~3 in \cite{gidel17a}) and the concept of pyramidal width (Eq.~9 in \cite{lacoste2015global}) instead of the border distance $\sigma_{\mathcal{X}}$, while still guaranteeing the same convergence as in Proposition~\ref{minmax lin conv}.
    \item More recently, \cite{chen2020efficient} proposed a projection-free algorithm (Algorithm~3) under a similar assumption to Proposition~\ref{minmax lin conv}, without requiring the solution to lie in the interior of $\mathcal{X} \times \mathcal{Y}$. This algorithm requires $\tilde{\mathcal{O}}(1/\sqrt{\varepsilon})$ first-order and $\tilde{\mathcal{O}}(1/\varepsilon^2)$ linear optimization oracle calls to achieve $\varepsilon$-precision for the saddle-point problem~\eqref{p: min-max}. Its main limitation is implementation complexity, as it involves three nested loops per iteration.
\end{itemize}

\item \textbf{Limitations and future directions:}
\begin{itemize}
    \item The primary limitation of the proposed method is the presence of a secondary loop in each iteration for solving the minimax problem. In the context of the Frank-Wolfe algorithm for minimizing a smooth convex function, two key inequalities, smoothness and convexity, are typically used. For the VIP, no explicit value function exists, making these inequalities difficult to apply. Although we can define a convex gap function $f(x)$ \eqref{gap-func}, whose minimization is equivalent to solving the VIP, this function is parametric, complicating the use of standard convex optimization inequalities. Restricting the domain to specific sets and leveraging duality may allow for a single-loop algorithm, which is an important future research direction. Adaptive or decreasing accuracy requirements for the inner loop may also help eliminate the secondary loop.
    
    \item Another limitation is the algorithm's dependence on problem-specific constants. Future extensions could involve adaptive stepsize methods \cite{malitsky2020golden}, which approximate Lipschitz or strong monotonicity constants to accelerate convergence.

    \item Recent works, such as \cite{liu2022newton}, establish Newton-type methods combined with Frank-Wolfe techniques for constrained self-concordant minimization, and \cite{lin2024perseus} propose high-order methods for solving the VIP. Developing high-order methods for the VIP using Frank-Wolfe techniques represents another promising research direction.
\end{itemize}
\end{enumerate}
\bibliographystyle{plain}
\bibliography{references.bib}

\begin{thebibliography}{10}

\bibitem{allevi2018evaluating}
Elisabetta Allevi, Adriana Gnudi, Igor~V Konnov, and Giorgia Oggioni.
\newblock Evaluating the effects of environmental regulations on a closed-loop supply chain network: A variational inequality approach.
\newblock {\em Annals of Operations Research}, 261(1):1--43, 2018.

\bibitem{baghbadorani2025douglas}
Reza~Rahimi Baghbadorani, Emilio Benenati, and Sergio Grammatico.
\newblock A douglas-rachford splitting method for solving monotone variational inequalities in linear-quadratic dynamic games.
\newblock {\em preprint available at arXiv:2504.05757}, 2025.

\bibitem{baghbadorani2024adaptive}
Reza~Rahimi Baghbadorani, Sergio Grammatico, and Peyman~Mohajerin Esfahani.
\newblock Adaptive accelerated composite minimization.
\newblock {\em preprint available at arXiv:2405.03414}, 2024.

\bibitem{ball2002robust}
Joseph~A Ball, Jerawan Chudoung, and Martin~V Day.
\newblock Robust optimal switching control for nonlinear systems.
\newblock {\em SIAM Journal on Control and Optimization}, 41(3):900--931, 2002.

\bibitem{bauschke2017correction}
Heinz~H Bauschke, Patrick~L Combettes, Heinz~H Bauschke, and Patrick~L Combettes.
\newblock {\em Convex Analysis and Monotone Operator Theory in Hilbert Spaces}.
\newblock Springer, 2017.

\bibitem{bertsekas1980class}
Dimitri~P Bertsekas.
\newblock A class of optimal routing algorithms for communication networks.
\newblock {\em NASA STI/Recon Technical Report N}, 81:11299, 1980.

\bibitem{bertsekas2009projection}
Dimitri~P Bertsekas and Eli~M Gafni.
\newblock Projection methods for variational inequalities with application to the traffic assignment problem.
\newblock In {\em Nondifferential and Variational Techniques in Optimization}, pages 139--159. 1982.

\bibitem{bliemer2003quasi}
Michiel~CJ Bliemer and Piet~HL Bovy.
\newblock Quasi-variational inequality formulation of the multiclass dynamic traffic assignment problem.
\newblock {\em Transportation Research Part B: Methodological}, 37(6):501--519, 2003.

\bibitem{braun2022conditional}
G{\'a}bor Braun, Alejandro Carderera, Cyrille~W Combettes, Hamed Hassani, Amin Karbasi, Aryan Mokhtari, and Sebastian Pokutta.
\newblock Conditional gradient methods.
\newblock {\em preprint available at arXiv:2211.14103}, 2022.

\bibitem{chen2020efficient}
Cheng Chen, Luo Luo, Weinan Zhang, and Yong Yu.
\newblock Efficient projection-free algorithms for saddle point problems.
\newblock {\em Advances in Neural Information Processing Systems}, 33:10799--10808, 2020.

\bibitem{combettes2021complexity}
Cyrille Combettes and Sebastian Pokutta.
\newblock Complexity of linear minimization and projection on some sets.
\newblock {\em Operations Research Letters}, 49(4):565--571, 2021.

\bibitem{dafermos1980traffic}
Stella Dafermos.
\newblock Traffic equilibrium and variational inequalities.
\newblock {\em Transportation science}, 14(1):42--54, 1980.

\bibitem{facchinei2003finite}
Francisco Facchinei and Jong-Shi Pang.
\newblock {\em Finite-dimensional variational inequalities and complementarity problems}.
\newblock Springer, 2003.

\bibitem{gidel2018variational}
Gauthier Gidel, Hugo Berard, Ga{\"e}tan Vignoud, Pascal Vincent, and Simon Lacoste-Julien.
\newblock A variational inequality perspective on generative adversarial networks.
\newblock {\em International Conference on Learning Representations (ICLR)}, 2019.

\bibitem{gidel17a}
Gauthier Gidel, Tony Jebara, and Simon Lacoste-Julien.
\newblock {Frank-Wolfe Algorithms for Saddle Point Problems}.
\newblock In Aarti Singh and Jerry Zhu, editors, {\em Proceedings of the 20th International Conference on Artificial Intelligence and Statistics}, volume~54 of {\em Proceedings of Machine Learning Research}, pages 362--371. PMLR, 20--22 Apr 2017.

\bibitem{giselsson2017tight}
Pontus Giselsson.
\newblock Tight global linear convergence rate bounds for douglas--rachford splitting.
\newblock {\em Journal of Fixed Point Theory and Applications}, 19(4):2241--2270, 2017.

\bibitem{hammond1984solving}
Janice Hammond.
\newblock {\em Solving asymmetric variational inequality problems and systems of equations with generalized nonlinear programming algorithms}.
\newblock PhD thesis, Massachusetts Institute of Technology, 1984.

\bibitem{harker1984variational}
Patrick Harker.
\newblock A variational inequality approach for the determination of oligopolistic market equilibrium.
\newblock {\em Mathematical Programming}, 30:105--111, 1984.

\bibitem{hu2011variational}
Ming Hu and Masao Fukushima.
\newblock Variational inequality formulation of a class of multi-leader-follower games.
\newblock {\em Journal of optimization theory and applications}, 151:455--473, 2011.

\bibitem{jaggi2013revisiting}
Martin Jaggi.
\newblock Revisiting frank-wolfe: Projection-free sparse convex optimization.
\newblock In {\em International conference on machine learning}, pages 427--435. PMLR, 2013.

\bibitem{kolmogorov2023solving}
Vladimir Kolmogorov.
\newblock Solving relaxations of map-mrf problems: Combinatorial in-face frank-wolfe directions.
\newblock In {\em Proceedings of the IEEE/CVF Conference on Computer Vision and Pattern Recognition}, pages 11980--11989, 2023.

\bibitem{lacoste2015global}
Simon Lacoste-Julien and Martin Jaggi.
\newblock On the global linear convergence of frank-wolfe optimization variants.
\newblock {\em Advances in neural information processing systems}, 28, 2015.

\bibitem{lin2024perseus}
Tianyi Lin and Michael~I Jordan.
\newblock Perseus: A simple and optimal high-order method for variational inequalities.
\newblock {\em Mathematical Programming}, pages 1--42, 2024.

\bibitem{liu2022newton}
Deyi Liu, Volkan Cevher, and Quoc Tran-Dinh.
\newblock A newton frank--wolfe method for constrained self-concordant minimization.
\newblock {\em Journal of Global Optimization}, pages 1--27, 2022.

\bibitem{malitsky2015projected}
Yura Malitsky.
\newblock Projected reflected gradient methods for monotone variational inequalities.
\newblock {\em SIAM Journal on Optimization}, 25(1):502--520, 2015.

\bibitem{malitsky2020golden}
Yura Malitsky.
\newblock Golden ratio algorithms for variational inequalities.
\newblock {\em Mathematical Programming}, 184(1):383--410, 2020.

\bibitem{malitsky2014extragradient}
Yura Malitsky and Vladimir Semenov.
\newblock An extragradient algorithm for monotone variational inequalities.
\newblock {\em Cybernetics and Systems Analysis}, 50(2):271--277, 2014.

\bibitem{mignoni2025monviso}
Nicola Mignoni, Reza Rahimi~Baghbadorani, Peyman Mohajerin~Esfahani, Raffaele Carli, Mariagrazia Dotoli, and Sergio Grammatico.
\newblock \texttt{monviso}: A {Python} package for solving monotone variational inequalities.
\newblock In {\em European Control Conference (ECC)}, 2025.
\newblock \url{https://github.com/nicomignoni/monviso}.

\bibitem{moursi1805douglas}
Walaa~M Moursi and Lieven Vandenberghe.
\newblock Douglas--rachford splitting for the sum of a lipschitz continuous and a strongly monotone operator.
\newblock {\em Journal of Optimization Theory and Applications}, 183:179--198, 2019.

\bibitem{nemirovskij1983problem}
Arkadij~Semenovi{\v{c}} Nemirovskij and David~Borisovich Yudin.
\newblock Problem complexity and method efficiency in optimization.
\newblock 1983.

\bibitem{nesterov1983method}
Yurii Nesterov.
\newblock A method for solving the convex programming problem with convergence rate $\mathcal{O}(1/k^2)$.
\newblock In {\em Dokl akad nauk Sssr}, volume 269, page 543, 1983.

\bibitem{nesterov2006solving}
Yurii Nesterov and Laura Scrimali.
\newblock Solving strongly monotone variational and quasi-variational inequalities.
\newblock 2006.

\bibitem{pantazis2024nash}
Georgios Pantazis, Reza~Rahimi Bahbadorani, and Sergio Grammatico.
\newblock Nash equilibrium seeking for a class of quadratic-bilinear wasserstein distributionally robust games.
\newblock {\em preprint available at arXiv:2411.09636}, 2024.

\bibitem{scutari2010convex}
Gesualdo Scutari, Daniel~P Palomar, Francisco Facchinei, and Jong-Shi Pang.
\newblock Convex optimization, game theory, and variational inequality theory.
\newblock {\em IEEE Signal Processing Magazine}, 27(3):35--49, 2010.

\bibitem{osqp}
Bartolomeo Stellato, Goran Banjac, Paul Goulart, Alberto Bemporad, and Stephen Boyd.
\newblock {OSQP}: an operator splitting solver for quadratic programs.
\newblock {\em Mathematical Programming Computation}, 12(4):637--672, 2020.
\newblock \url{https://github.com/osqp/osqp}.

\bibitem{yao1991generalized}
Jen-Chih Yao.
\newblock The generalized quasi-variational inequality problem with applications.
\newblock {\em Journal of Mathematical Analysis and Applications}, 158(1):139--160, 1991.

\end{thebibliography}
\end{document}